\newtheorem{theorem}{Theorem}[section]
\newtheorem{lemma}[theorem]{Lemma}
\newtheorem{corollary}{Corollary}[theorem]
\theoremstyle{definition}
\newtheorem{definition}[theorem]{Definition}
\theoremstyle{remark}
\newtheorem{remark}[theorem]{Remark}
\numberwithin{equation}{section}
\newtheorem{proposition}{Proposition}
\begin{document}
\title{An embedding theorem for Nijenhuis Lie algebras}


\author{Alireza Najafizadeh}
\address{Department of Mathematics, Payame Noor University, P.O.Box 19395-3697 Tehran, Iran}
\email{najafizadeh@pnu.ac.ir}

\author{Chia Zargeh}
\address{Modern College of Business and Science, Muscat, Sultanate of Oman}
\email{Chia.Zargeh@mcbs.edu.om}

\subjclass[2010]{13P10, 17A50, 16Z10}


\date{}

\keywords{Nijenhuis Lie algebras, Gr\"obner-Shirshov basis, HNN-extension}
\maketitle

\section{Abstract}
This paper introduces the Higman-Neumann-Neumann extension (HNN extension; for short) for Nijenhuis Lie algebras and provides an embedding theorem. To this end, we employ the theory of Gr\"obner-Shirshov basis for Lie $\Omega$-algebras in order to find a normal form for our construction. Then we show that every Nijenhuis Lie algebra embeds into its HNN-extension. 
\keywords{Nijenhuis Lie algebras, Gr\"obner-Shirshov basis, HNN extension}

\section{Introduction}
Nijenhuis Lie algebras are a subclass of Lie $\Omega$-algebras. Nijenhuis operator on Lie algebras has been used in the study of integrability of nonlinear evolution equations in \cite{Dorfman}. Recently, the Nijenhuis operator has been studied in the context of various algebraic structures. Asif in \cite{Asif} studied several aspects of Nijuenhuis Lie conformal algebras and provided a classification theory for non-abelian extensions of Nijenhuis Lie conformal algebras. Moreover, Das \cite{Das} investigated the cohomology theory of Nijenhuis Lie algebras. Nijenhuis operator has been studied on other algebraic structures such as Leibniz algebras \cite{Mondal}, $n$-Lie algebras \cite{Liu}, Hom-Lie conformal algebras \cite{Asif2} and Lie bialgebra \cite{Li} with some applications in mathematical physics \cite{Chen}. In this work, we construct the Higman-Neumann-Neumann extension, originally a construction in combinatorial group theory \cite{Higman}, and explore its applications to Nijenhuis Lie algebras. To this end, we employ Gr\"obner-Shirshov basis theory of Lie $\Omega$-algebras that enables us to provide a normal form for a presentation of HNN-extension via structural constants. Then we provide an embedding theorem. The concept of HNN-extension has been spread to various algebraic structures, including Lie (super) algebras and Hom-Lie algebras, Leibniz algebras, Poisson algebras and dialgebras, see \cite{Arenas1}, \cite{Arenas2}, \cite{Klein}, \cite{Ladra1}, \cite{Ladra2}, \cite{Silvestrov} and \cite{Zargeh1}. It has been used to prove embedding theorems and the undecidability of Markov properties   \cite{Najafizadeh}, \cite{Shahryari}, \cite{Wasserman} and \cite{Zargeh2}. The theory of Gr\"obner-Shirshov basis which plays a key role in our construction has been developed in recent decades and has resulted in solving many combinatorial questions and algorithmic problems. In this work, we employ Gr\"obner-Shirshov basis theory for Lie $\Omega$-algebras \cite{Qiu}. There are several comprehensive works introducing the Gröbner-Shirshov basis theory; see, for instance, \cite{Bokut2}  and the references therein. 

The paper is organized as follows. In the first section, we provide an overview of Nijenhuis Lie algebras. In Section $2$, we recall the Composition-Diamond Lemma for Lie $\Omega$-algebras. In the third section, we construct HNN-extension and provide an embedding theorem.

\section{An overview on Nijenhuis Lie algebras}
A Nijenhuis Lie algebra is defined as $(\mathfrak{L}, [\cdot,\cdot], N),$  denoted by $N\mathfrak{L}$ where:
\begin{itemize}
    \item $(\mathfrak{L}, [\cdot,\cdot])$ is a Lie algebra,
    \item $N: \mathfrak{L} \to \mathfrak{L}$ is a linear operator satisfying the {Nijenhuis condition}:
    \[
    [N(x), N(y)] = N\left([N(x), y] + [x, N(y)] - N([x, y])\right) \quad \forall x,y \in \mathfrak{L}.
    \]
\end{itemize}
 \begin{definition}\label{derivation}
        A derivation of a Nijenhuis Lie algebra $N\mathfrak{L}$ is given by $D: N\mathfrak{L} \to N\mathfrak{L}$ that satisfies
        \begin{itemize}
            \item [I.] $D[x,y]=[D(x),y]+[x,D(y)],$
            \item[II.] $D \circ N= N \circ D,$
        \end{itemize}
        for all $x,y \in N\mathfrak{L}.$
    \end{definition}
        \begin{definition}
        A subspace $\mathfrak{S}$ of $N\mathfrak{L}$ is called a Nijenhuis Lie subalgebra if 
        \begin{itemize}
            \item $\mathfrak{S}$ is a Lie subalgebra of $N\mathfrak{L}$, i.e., $[x,y] \in \mathfrak{S},~ \forall x,y \in \mathfrak{S},$
            \item $N(\mathfrak{S}) \subseteq \mathfrak{S},$ $\mathfrak{S}$ is invariant under the Nijenhuis operator. 
        \end{itemize}
    \end{definition}
It is easy to see that an associative Nijenhuis algebra $N\mathfrak{A},$ that is an associative algebra with Nijenhuis operator, is a Lie Nijenhuis algebras $N\mathfrak{L}$, where $$[\mathfrak{a}, \mathfrak{a}']=\mathfrak{a}. \mathfrak{a}' -\mathfrak{a}'. \mathfrak{a},$$ for $\mathfrak{a}, \mathfrak{a}' \in N\mathfrak{A}.$

A Nijenhuis Lie algebra $N\mathfrak{Lie}(X)$ on $X$ with an injective map $i:X \to N\mathfrak{Lie}(X)$ is called a free Nijenhuis Lie algebra on $X$, if for any Nijenuis Lie algebra $N\mathfrak{L}$ and any map $\sigma: X \to N\mathfrak{L},$ there exists a unique Nijenhuis Lie homomorphism $\tilde{\sigma}: N\mathfrak{Lie}(X) \to N\mathfrak{L}$ such that $\tilde{\sigma}i=\sigma.$ The elements of $N\mathfrak{Lie}(X)$ are called Lie Nijenhuis polynomials on $X,$ which we call it Lie $N$-polynomials. The leading term and the leading coefficient of $f$ are denoted by $\bar{f}$ and $lc(f)$, respectively. A polynomial whose leading coefficient is one is called monic. 

\section{Gr\"obner-Shirshov basis for Nijenhuis Lie algebras}
In this section, we recall the Gr\"obner-Shirshov basis and the Composition-Diamond lemma for Lie $\Omega$-algebras according to \cite{Qiu}. We recall some definitions and lemmas from Gr\"obner-Shirshov basis of Lie algebras \cite{Bokut1} that are required in the study of free Nijenhuis Lie algebras. We denote the set of all associative words on $X$ including the empty word 1 by $(X)$ and the set of all non-associative words on $X$ by $[X]$. For any $u \in (X)$, denote $deg(u)$ to be the degree (length) of $u$. So, the lexicographic and degree-lexicographic ordering are defined as:
    \begin{itemize}
        \item[(i)]  $1 >_{lex} u$ for any nonempty word $u$, and if $u = x_iu'$ and $v = x_jv'$, where $x_i, x_j \in X$, then $u >_{lex} v$ if $x_i > x_j$, or $x_i = x_j$ and $u' >_{lex} v'$ by induction.
        \item[(ii)] $u >_{deg-lex} v$ if $deg(u) > deg(v)$, or $deg(u) = deg(v)$ and $u >_{lex} v$. 
    \end{itemize}
    \begin{definition}
        A non-empty associative word $w$ is called a \emph{ Lyndon-Shirshov word} (LS-word for short) on $X$, if $w = uv >_{lex} vu$ for any decomposition of $w = uv$, where $1 \neq u,v \in (X).$ The set of all Lyndon-Shirshov words is denoted by $LS(X).$

    \end{definition}
\begin{definition}
    A non-associative word $[u] \in [X]$, is called a \emph{Lyndon-Shirshov term} (LS-term for short) if: 
\begin{enumerate}
\item $\psi([u]) \in LS(X)$,
\item if $u=[u_1,u_2]$, then both $u_1$ and $u_2$ are LS-terms,
\item in (2) if $u_1=[u_{1,1},u_{1,2}]$, then $\psi([u_{1,2}]) \leq  \psi([u_2])$, 
\end{enumerate}
where $\psi$  is the homomorphism $\psi: [X] \to (X)$ given by $\psi([x])=x$ for every $x \in {X}.$ The set of all Lyndon-Shirshov terms is denoted by $LS[X].$
\end{definition}    
 \begin{remark}
  For any $u \in LS(X)$, there exists a unique Shirshov's bracketing method such that $[u]_{s} \in LS[X]$. Throughout the remainder of this work, we adopt the notation $[u]$ for Shirshov's method, using the original notation only when required.
 \end{remark}
Given a free Lie algebra $\mathfrak{Lie}(X) $, it is known that the set of Lyndon-Shirshov terms on $X$ forms a basis for this. The following lemma summarizes some key properties of Lyndon-Shirshov words and terms.    
 \begin{lemma}[\cite{Bokut2} and references therein]
      \begin{itemize}
          \item [(i)] If $u \in LS(X),$ then $\overline{[u]}=u.$
          \item[(ii)]  Any $[u]\in [X]$ has a representation in $\mathfrak{Lie}(X)$ as $[u]=\sum\alpha_{i}[u_i],$ where $\alpha_{i}\in K$ and $u_i \in LS(X).$
          \item[(iii)] For any $u \in (X)$, there is a unique decomposition $u=u_1\dots u_m,$ where $u_{i} \in LS(X)$ and $u_{j} \leq_{lex} u_{j+1},$ $1\leq  j \leq m-1.$
          \item[(iv)] Let $u=avb,$ where $u,v \in LS(X)$ and $a,b \in (X)$. Then $[u]=[a[vc]b],$ where $b=cd$ for some $c,d \in (X).$ Then $[v]=a[v]b+ \sum \alpha_{i}a_{i}[v]b_{i},$ where $\alpha_{i}\in K$, $a_{i}, b_{i} \in (X)$ and $a_{i}vb_i < _{deg-lex} avb=u.$ It follows that $\overline{[u]_{v}}=u.$
      \end{itemize} 
   \end{lemma}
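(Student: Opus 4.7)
The plan is to establish each of the four parts by induction, relying on the canonical factorization of Lyndon-Shirshov words and Shirshov's bracketing procedure. None of these facts is specific to the Nijenhuis setting, so I would work entirely inside the free Lie algebra $\mathfrak{Lie}(X)$, treating the lemma as a purely combinatorial statement about LS-words and LS-terms, then invoke it in later sections as needed.

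For (i), I would induct on the length of $u$. The base case with $u$ a single letter is immediate. For longer $u$, Shirshov's bracketing writes $[u]=[[u_1],[u_2]]$, where $u=u_1 u_2$ is the standard factorization with $u_1, u_2 \in LS(X)$. By the induction hypothesis $\overline{[u_i]}=u_i$, and expanding the outer bracket yields $u_1 u_2$ and $u_2 u_1$ as the top-degree associative monomials; the defining inequality $u_1 u_2 >_{lex} u_2 u_1$ then singles out $u=u_1 u_2$ as the leading word. For (ii), I would proceed by noetherian induction on $\overline{[u]}$ with respect to $>_{deg-lex}$: anti-commutativity and the Jacobi identity allow one to rewrite $[u]$ as $\alpha[\overline{[u]}]$ (when $\overline{[u]} \in LS(X)$, using (i)) plus a sum of bracketed terms whose leading words are strictly smaller, which the induction hypothesis disposes of. This is essentially the classical argument that LS-terms form a linear basis of $\mathfrak{Lie}(X)$.

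For (iii), I would invoke the Chen-Fox-Lyndon factorization adapted to the Shirshov convention $uv >_{lex} vu$: existence via a greedy algorithm producing the required non-decreasing sequence of LS-words, and uniqueness by comparing two such decompositions position by position and ruling out the case where one first factor is a proper prefix of the other. For (iv), the most intricate part, I would construct the bracketing $[u]_v$ by induction on $|a|+|b|$. Given $u=avb$ with $u, v \in LS(X)$, one selects $c$ so that $vc$ matches a sub-bracket produced by Shirshov's bracketing of $u$, writes $b=cd$, and expands $[a[vc]b]$; the outcome is the desired leading word $avb$ plus correction terms $a_i v b_i$, and each application of (i) to the intermediate sub-brackets confirms that every correction strictly decreases in $>_{deg-lex}$.

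The main obstacle is clearly (iv). Parts (i)--(iii) are classical and follow from fairly direct inductions, whereas (iv) requires a simultaneous induction tracking both the bracket structure and the deg-lex ordering of the resulting associative words. The delicate point is verifying that every correction term $a_i v b_i$ satisfies the strict inequality $a_i v b_i <_{deg-lex} avb$ rather than merely being a distinct word, and this is where the Lyndon-Shirshov condition on $v$ is essential: it constrains the possible lexicographic comparisons in exactly the way needed to push each correction below $u$ in $>_{deg-lex}$.
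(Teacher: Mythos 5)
The paper offers no proof of this lemma at all: it is stated as background, quoted from \cite{Bokut2} (and ultimately going back to Shirshov's original work on bases of free Lie algebras), so there is no internal argument to compare yours against. Judged on its own, your sketch correctly reconstructs the standard proofs from that literature. For (i), your induction via the standard factorization $u=u_1u_2$ is the classical argument; the only point you gloss is why no cross term interferes: all monomials in the expansion of $[[u_1],[u_2]]$ have the same degree, lex comparison of equal-length words is compatible with concatenation, so the leading words of $[u_1][u_2]$ and $[u_2][u_1]$ are $u_1u_2$ and $u_2u_1$ respectively, and the \emph{strict} inequality $u_1u_2>_{lex}u_2u_1$ rules out cancellation. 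Parts (ii) and (iii) are likewise the standard rewriting argument (anti-commutativity plus Jacobi, with a noetherian induction that works because deg-lex over a well-ordered alphabet is a well-order) and the Chen--Fox--Lyndon factorization in the Shirshov convention (non-decreasing factors, matching $u_j\leq_{lex}u_{j+1}$).

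Your treatment of (iv) is the one place where the sketch is materially thinner than the classical proof, and where "induction on $|a|+|b|$" is not really the engine. The crux is the explicit construction of the special bracketing: since $u$ and $v$ are LS-words and $u=avb$, Shirshov's bracketing of $u$ automatically contains a sub-bracket of the form $[vc]$ with $b=cd$ (this is the first assertion of the lemma, which your sketch takes as something to be "selected" rather than proved); one then factors $c=c_1\cdots c_k$ into LS-words as in (iii), verifies that each $c_i<_{lex}v$ --- a consequence of $u$ being LS, and precisely the constraint you correctly flag at the end --- and replaces $[vc]$ by the iterated bracket $[\cdots[[v],[c_1]],\dots,[c_k]]$. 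The leading-word computation of (i) applied to this modified bracket then yields $a[v]b$ plus terms $a_i[v]b_i$ with $a_ivb_i<_{deg-lex}avb$; since all words in play have the same degree, the comparison is purely lexicographic and the inequality $c_i<_{lex}v$ is exactly what makes it strict. So your proposal identifies the delicate point accurately but leaves the two lemmas that resolve it (existence of the sub-bracket $[vc]$ inside Shirshov's bracketing, and $c_i<_{lex}v$) unstated. One editorial note: the displayed formula in the paper's statement, $[v]=a[v]b+\sum\alpha_ia_i[v]b_i$, is a typo for $[u]_v=a[v]b+\sum\alpha_ia_i[v]b_i$, and similarly $h_{xy}^{N'}$ in Section 4 silently relies on this corrected reading; your proposal interprets the statement the right way.
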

 \subsection{Nijenhuis Lyndon-Shirhov words and terms}
    Consider $$N(X)=\bigcup_{m=1}^{\infty} \{N(x) \mid x \in X\},$$
    and define $(N;X)_{0}=(X)$ and $[N;X]_{0}=[X]$. Then assume that we have $(N;X)_{n-1}$ and $[N;X]_{n-1}$. Then
    $$(N;X)_{n}=(X \cup N((N;X)_{n-1}),$$
    $$[N;X]_{n}=[X\cup N([N;X]_{n-1})].$$
    The set of Nijenhuis words and terms on $X$ are denoted by $(N;X)=\bigcup_{n=0}^{\infty}(N;X)_{n}$ and $[N;X]=\bigcup_{n=0}^{\infty}[N;X]_{n}$, respectively. If $u\in X \cup N((N;X)),$ then $u$ is called prime. Therefore, for any $u\in (N;X)$, $u$ can be expressed uniquely in the canonical form $u=u_1\dots u_{n}, \quad n\geq 1,$
    where each $u_i$ is prime. The number $n$ is called the breath of $u$ denoted by $bre(u)$. The degree of $u$, denoted by $deg(u),$ is defined to be the total number of all occurrences of all $x\in X $ and the Nijenhus operator $N.$ The \emph{depth} is defined as $dep(u)=min\{n \mid u \in (N;X)_{n}\}.$ Note that $dep([u])=dep((u)).$

 \subsubsection{Ordering}
    The Deg-lex ordering $>_{Deg-lex}$ is defined on the set of associative Nijenhuis words, $(N;X),$ as follows:
    \[ u=u_1\dots u_n >_{Deg-lex} v=v_1\dots v_m \quad \text{if}\quad wt(u)> wt(v)\quad \text{lexicographically},\]
    where $wt(u)=(deg(u),bre(u),u_1,\dots,u_{n})$. Note that in the Nijenhuis Lie algebra, the ordering on the operator is trivial and $X$ is a well-ordered set of generators.

\begin{definition}
    The Lyndon-Shirshov Nijenhuis words and terms are defined by induction on the depth of Nijenhuis words. For $n=0,$ define 
    $LS(N;X)_{0}=LS(X)$ and $LS[N:X]_{0}=LS[X]$ with respect to $>_{lex}$ order on $(X).$ Assume that we have $LS(N;X)_{n-1}$ and $LS[N;X]_{n-1}=\{[u] \mid u \in LS(N;X)_{n-1})\}.$ Then define
    \[LS(N;X)_{n}=LS(X \cup N(LS(N;X)_{n-1}))\]
    with respect to the $>_{lex}$ order.
\end{definition}
Given $u\in X \cup N(LS(N;X)_{n-1}),$ the bracketing on $u$ is defined as $[u]=u$ for $u \in X$ and $[u]=w([v]),$ if $u=w(v).$ The non-associative Lyndon-Shirshov Nijenhuis terms are derived from bracketing of associative LS-Nijenhuis words and defined as 
\[[X \cup N(LS(N;X)_{n-1})]=\{[u]\mid u \in X \cup N(LS(N;X)_{n-1})\}.\]
Note that any order on associative LS-Nijenhuis words induces an order on the non-associative case. In summary, the Lyndon-Shirshov Nijenhuis words and terms are defined as follows, respectively:
\[LS(N;X)= \bigcup_{n=0}^{\infty} LS(N;X)_{n},\]
\[LS[N;X]=\bigcup_{n=0}^{\infty}LS[N;X]_{n}.\]
\begin{remark}
    The set of all Lyndon-Shirshov Nijenhuis terms forms a linear basis of $N\mathfrak{Lie}(X),$ (Lemma 2.7, \cite{Qiu}).
\end{remark}
 \subsubsection{Composition of polynomials}
    In this part, the possible compositions of polynomials in Nijenhuis Lie algebras are recalled. 
    First, we recall certain polynomials in the context of the Nijenhuis Lie algebras that are used in computation of compositions; for more details, see \cite{Bokut3} and \cite{Qiu}. 

    Let consider $N\mathfrak{A}(X)$ be the free associative Nijenhuis algebra on $X$ and $\star \in X.$ A $\star$-$N$-word is any expression in $(N;X\cup \{\star\})$ with only one occurrence of $\star.$ Then the set of $\star$-$N$-words on $X$ is denoted by $(N;X)^{\star}.$ Let $\pi$ be a $\star$-$N$-word and $s\in N\mathfrak{A}(X).$ Then $\pi\mid_{s}=\pi\mid_{\star\mapsto s}$ is called an $s$-Nijenhuis word denoted by $s$-$N$-word. 
    \begin{definition}[\cite{Qiu}]
        Let $\pi \in (N;X)^{\star}$ and $f \in N\mathfrak{Lie}(X) \subseteq N\mathfrak{A}(X)$ be a monic Lie Nijenhuis polynomial. If $\pi\mid_{\bar{f}} \in LS(N;X),$ then
        \[   [\pi \mid_{f}]_{\bar{f}} = \pi\mid_{f} + \sum \alpha_{i} \pi_{i}\mid_{f},\]
        where $\alpha_{i} \in K$ and $\pi_{i}\mid_{\bar{f}}~ <_{Dl} ~\pi\mid_{\bar{f}}.$
    \end{definition}
   
    It has been shown in \cite{Qiu} that given any $f\in N\mathfrak{Lie}(X)$ and $\pi\mid_{\bar{f}}\in LS(N;X)$, then $[\pi\mid_{f}]_{\bar{f}}=\pi\mid_{f} + \sum\alpha_{i} \pi_{i}\mid_{f},$ where $\alpha_{i}\in K$ and $\pi_{i}\mid_{\bar{f}} <_{Dl} \pi\mid_{\bar{f}}.$
    There are two types of compositions:
    \begin{itemize}
        \item [I.] If $w=\bar{f}a=b\bar{g},$ where $a,b \in (N;X)$ with $bre(w)< bre(\bar{f})+bre(\bar{g}),$ then \[ \langle f,g \rangle_{w}= [fa]_{\bar{f}}-[bg]_{\bar{g}}\]
        is called the \emph{intersection composition} of $f$ and $g$ with respect to $w.$
        \item[II.] If $w=\bar{f}=\pi\mid_{\bar{g}}$ then
        \[ \langle f,g\rangle_{w}=f-[\pi\mid_{g}]_{\bar{g}}\]
        is called the inclusion composition of $f$ and $g$ with respect to $w.$
    \end{itemize}
  If $S$ is a monic subset of $N\mathfrak{Lie}(X)$, then the composition $(f,g)_{w}$ is called \emph{trivial modulo} $(S,w)$ if 
  \[\langle f,g \rangle_{w}=\sum \alpha_{i} [\pi_{i}\mid_{s_{i}}]_{\overline{s_{i}}},\]
  where $\alpha_{i}\in K,$ $s_{i}\in S,$ and $ [\pi\mid_{s_{i}}]_{\overline{s_{i}}}$ is a special normal $s_{i}$-word and $ \pi_{i}\mid_{s_{i}} <_{Dl} w.$ 
  \begin{definition}
      A monic set $S$ is called a \emph{Gr\"obner-Shirshov basis} in $N\mathfrak{Lie}(X)$ if any composition $(f,g)_{w}$ of $f,g\in S$ is trivial modulo $(S,w).$
  \end{definition}
    The following is the Composition-Diamond lemmma.
    \begin{theorem}[\cite{Qiu}]\label{CDlemma}
        Let $S \subseteq N\mathfrak{Lie}(X)$ be a non-empty monic set and $Id(S)$ the ideal of $N\mathfrak{Lie}(X)$ generated by $S.$ Then the followings are equivalent:
        \begin{itemize}
            \item [(i)] $S$ is a Gr\"obner-Shirshov basis in $N\mathfrak{Lie}(X).$
            \item[(ii)] $f\in Id(S) \rightarrow \bar{f}=\pi\mid_{\bar{s}} \in LS(N;X)$ for some $s\in S$ and $\pi \in (N;X)^{\star}.$
            \item[(iii)] The set 
            \[Irr(S)=\{[w]\mid w \in LS(N;X),~ w \neq \pi\mid_{\bar{s}}.~ s\in S, ~ \pi\in (N:X)^{\star}\}  \]
            is a linear basis of the Lie Nijenhuis algebra $N\mathfrak{Lie}(X \mid S).$
            \end{itemize}
    \end{theorem}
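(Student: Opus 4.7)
The plan is to establish the three-way equivalence via the cyclic chain $(i) \Rightarrow (ii) \Rightarrow (iii) \Rightarrow (i)$, mimicking the scheme used for Gr\"obner-Shirshov bases in ordinary Lie algebras but adapted to the Nijenhuis setting where the operator $N$ and $\star$-$N$-words replace the associative words of the classical theory, as in \cite{Bokut1, Qiu}.

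For $(i) \Rightarrow (ii)$: given $f \in Id(S)$, I would write $f = \sum_i \alpha_i [\pi_i \mid_{s_i}]_{\overline{s_i}}$ with $s_i \in S$ and $\pi_i \in (N;X)^{\star}$, and order the summands by the Deg-lex value of $\pi_i \mid_{\overline{s_i}}$. If the maximum value $w$ occurs only once, the claim is immediate because $\bar{f} = w = \pi \mid_{\bar{s}}$ for some $s \in S$. If $w$ occurs more than once, I would pick two such summands and split into two cases according to whether the two occurrences overlap as an intersection or as an inclusion, then invoke triviality of the corresponding composition modulo $(S, w)$ to reduce the multiplicity of $w$ by one. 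Induction on the multiset of leading terms under the Deg-lex order then concludes.

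For $(ii) \Rightarrow (iii)$: spanning of $Irr(S)$ in $N\mathfrak{Lie}(X \mid S)$ follows from a standard reduction: any $[w] \in LS[N;X]$ whose underlying word equals $\pi \mid_{\bar{s}}$ is congruent modulo $Id(S)$ to $[w] - [\pi \mid_s]_{\bar{s}}$, which has strictly smaller leading $N$-word by the key property of the bracketing $[\pi \mid_s]_{\bar{s}}$, so induction on $>_{Deg-lex}$ expresses every element as a combination of $Irr(S)$-terms. Linear independence is precisely where $(ii)$ bites: a nontrivial combination $\sum \alpha_i [w_i] \in Id(S)$ with $[w_i] \in Irr(S)$ would have leading term $w_j \in LS(N;X)$ of the form $\pi \mid_{\bar{s}}$ by $(ii)$, contradicting $[w_j] \in Irr(S)$. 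For $(iii) \Rightarrow (i)$: given $f,g \in S$ and a composition $\langle f,g \rangle_w$, one observes $\langle f,g \rangle_w \in Id(S)$ with leading term strictly below $w$ in the Deg-lex order; applying $(iii)$ to rewrite each reducible Lyndon-Shirshov Nijenhuis term yields an expression $\sum \alpha_i [\pi_i \mid_{s_i}]_{\overline{s_i}}$ with all $\pi_i \mid_{\overline{s_i}} <_{Dl} w$, which is exactly triviality modulo $(S, w)$.

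The main obstacle will be the careful bookkeeping in $(i) \Rightarrow (ii)$: unlike the purely associative or classical Lie case, the presence of $N$ means overlaps may occur inside $N$-images, so one must verify that the two composition types (intersection and inclusion) still exhaust all overlap patterns and that Shirshov's bracketing operator $[\pi \mid_f]_{\bar{f}}$ interacts well with the Deg-lex ordering when $\star$ appears nested under $N$. Part (iv) of the Lyndon-Shirshov lemma recalled above, together with Lemma 2.7 of \cite{Qiu}, are the key tools ensuring that the reductions remain inside the linear span of Lyndon-Shirshov Nijenhuis terms and that the Deg-lex induction actually terminates in every branch.
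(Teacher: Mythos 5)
The first thing to note is that the paper contains no proof of this statement at all: Theorem \ref{CDlemma} is quoted verbatim from Qiu--Chen \cite{Qiu} as an imported tool, so there is no in-paper argument to compare yours against. Measured against the standard Bokut--Chen style proof (the one actually given in \cite{Qiu}), your cyclic scheme $(i)\Rightarrow(ii)\Rightarrow(iii)\Rightarrow(i)$ is the correct skeleton, and your sketches of $(ii)\Rightarrow(iii)$ (spanning by Deg-lex reduction --- which, as a minor point, holds unconditionally and does not need $(ii)$ --- plus linear independence via the leading-term contradiction) and of $(iii)\Rightarrow(i)$ (a composition lies in $Id(S)$, has leading word strictly below $w$, and the rewriting furnished by $(iii)$ yields exactly triviality modulo $(S,w)$) are sound.

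The genuine gap is in $(i)\Rightarrow(ii)$. When the maximal word $w$ occurs as the leading word of two summands $[\pi_i\mid_{s_i}]_{\overline{s_i}}$ and $[\pi_j\mid_{s_j}]_{\overline{s_j}}$, the occurrences of $\overline{s_i}$ and $\overline{s_j}$ inside $w$ fall into \emph{three} configurations, not two: they may overlap (intersection composition), one may contain the other (inclusion composition), or they may be mutually disjoint --- separated subwords of $w$, possibly sitting inside different arguments of nested applications of $N$. Your case split covers only the first two, and for the disjoint configuration there is no composition whose triviality modulo $(S,w)$ you can invoke. In \cite{Qiu}, as in every CD-lemma of this type, the disjoint case is handled by a separate lemma: the difference of two normal $s$-words with the same leading word and mutually disjoint occurrences is itself a linear combination of normal $s$-words whose leading words are strictly Deg-lex-smaller than $w$ (one substitutes $s_j = \overline{s_j} \cdot lc(s_j) + (\text{lower terms})$ into one occurrence and expands, using that $\overline{[\pi\mid_s]_{\bar{s}}} = \pi\mid_{\bar{s}}$). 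Without this lemma the induction on the multiset of leading words does not close. You half-acknowledge the issue when you say one must ``verify that the two composition types exhaust all overlap patterns,'' but the resolution is not a verification that nothing else occurs --- the third configuration does occur, and it requires its own argument rather than a reduction to compositions.
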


\section{HNN construction for Nijenhuis Lie algebras}

Let $ N\mathfrak{L}$ be a Nijenhuis Lie algebra and $\mathfrak{S}$ be a subalgebra of  $ N\mathfrak{L}$. Let $D \colon \mathfrak{S}\to N\mathfrak{L}$ be a derivation defined on the subalgebra $\mathfrak{S},$ and $N'$ is an extended Nijenhuis operator such that $N'\mid_{N\mathfrak{L}}=N.$ Then the corresponding HNN-extension is defined as a Nijenhuis Lie algebra presented by:
\begin{equation}\label{HNN-extension}
   N'H_{D}= \Bigl \langle N\mathfrak{L},t \mid [t,a] =D(a), \quad [t,N(a)]=D(N(a)),~ ~ a \in \mathfrak{S} \Bigl \rangle. 
\end{equation}
Here, $t$ is a new symbol that does not belong to $N\mathfrak{L}$. Note that Definition \ref{derivation} implied that $N'(t)=t.$
Assume that $X^{\prime} = X \cup \{t \}$, where $X$ is a well-ordered basis of $N\mathfrak{L}$ and $t> X$. Let denote by $Y$ the basis of $\mathfrak{S}$. We consider the following polynomials:
\begin{itemize}\label{relations}
    \item[I.] $f_{xy}=[x,y] - \sum_{v} \alpha_{xy}^{v} v$ 
    \vspace{0.1in}
    \item[II.] $f_{xy}^{N'}= [x , N(y)] - \sum_{u}  \sum_v n_y^v \alpha_{xv}^u u, $
    \vspace{0.1in}
    \item[III.] $h_{xy}^{N'}= [N(x) , N(y)]-\sum_v \sum_u \sum_w n_x^v n_y^u \alpha_{vu}^w $
    \vspace{0.1in}
    \item[IV.] $g_{a}=[t,a]- \sum_{v} \delta _{a}^{v} v,$
    \vspace{0.1in}
    \item[V.] $g_{a}^{N'}= [t,N(a)]- \sum_{u}  \sum_v n_a^v \delta_{v}^u u $
\end{itemize}
\vspace{0,1in}
where $v$ is an arbitrary element in $X,$
and $x, y\in X$ such that $x>_{\text{lex}}y$ and $a\in Y$. Therefore, we have  
\[N'H_{D}= \Bigl \langle  X \cup \{t\} \mid f_{xy}, f_{xy}^{N'}, h_{xy}^{N'}  \ \ x,y \in X; \  g_{a}, g_{a}^{N'} \ \  a \in Y \Bigr \rangle  , \]
\vspace{0.1in}
 as a presentation of HNN-extension $H_{D}$ (\ref{HNN-extension}) through structural constants. 
 
 Now, consider $S=\{f, f^{N'}, h^{N'}, g, g^{N'}\}$.
 Note that the coefficients used in $f_{xy}^{N'},$ $h_{xy}^{N'}$ and $g_{a}^{N'}$ are defined on the basis of the operator $N'$ that satisfies the Nijenhuis condition. We have some relations between structure constants due to the Jacobian identity, such as
 $\alpha_{xy}=-\alpha_{yx}$ and
\begin{equation}
\sum_m \left( \alpha_{xy}^v \alpha_{vu}^u + \alpha_{yz}^v \alpha_{vx}^u + \alpha_{zx}^v \alpha_{vy}^u \right) = 0. 
   \end{equation}
   In fact, due to the Nijenhuis operator we have $N(x)=\sum_{v\in X} n_{x} v $, then there are relations due to the derivation $D$ as follows:
    \begin{equation}\label{derivationrelation}
        \sum_{v} \alpha_{ab}^{v} \delta_{v}^{u}=\sum_{v}(\delta_{a}^{v} \alpha_{vb}^{u} + \delta_{b}^{v} \alpha_{av}^{u}),
    \end{equation}
    \begin{equation}\label{nijenderirelation}
        \sum_{u,v,k,m} n_a^u n_b^v \alpha_{uv}^k \delta_{k}^m-\sum_{u,v,k,m} n_{a}^u \delta_{v}^u n_b^k \alpha_{uk}^m - \sum_{u,v,k,m} n_b^u \delta_{u}^v n_v^k \alpha_{kv}^m=0,
    \end{equation}
 \begin{equation}
     \sum_{u}\delta_{a}^v n_{v}^u u = \sum_{u} n_{a}^{v} \delta_{v}^u u.
 \end{equation}
 \begin{proposition}
     The set $\{f_{xy}, f_{xy}^{N'}, g_a, g_{a}^{N'}, h_{xy}^{N'}\}$ forms a Gr\"obner-Shirshov basis for the HNN-extension $N'H_{D}.$
 \end{proposition}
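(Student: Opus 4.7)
The plan is to invoke the Composition-Diamond Lemma (Theorem \ref{CDlemma}) and verify directly that every composition among the monic polynomials in $S=\{f_{xy},\,f_{xy}^{N'},\,h_{xy}^{N'},\,g_a,\,g_a^{N'}\}$ is trivial modulo $(S,w)$. The first step is to record the leading associative Nijenhuis words: since $t>X$ and $x>_{\text{lex}}y$ in the relations of types I--III, one has $\bar{f}_{xy}=xy$, $\bar{f}_{xy}^{N'}=x\,N(y)$, $\bar{h}_{xy}^{N'}=N(x)\,N(y)$, $\bar{g}_a=ta$, and $\bar{g}_a^{N'}=t\,N(a)$. All of these leading words have breath two, so intersection compositions can only arise when one prime factor is shared between the right end of $\bar{s}_1$ and the left end of $\bar{s}_2$, while inclusion compositions occur when an entire leading word sits inside an $N$-envelope of another.

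Next, I would enumerate the possible composition pairs. The compositions of type $(f_{xy},f_{yz})$ reduce, after applying Shirshov's special bracketing, to the classical Jacobi identity, and their triviality modulo $(S,w)$ is guaranteed by the quadratic relation on the $\alpha$'s displayed in equation (3.2). Compositions involving $f_{xy}^{N'}$ and $h_{xy}^{N'}$ among themselves are controlled by expanding $N(x)=\sum_v n_x^v\, v$ and invoking the Nijenhuis identity; the point is that the defining equation for a Nijenhuis operator matches exactly the rewriting of the leading term $N(x)N(y)$ via $h_{xy}^{N'}$. The compositions between the $t$-relations $g_a,g_a^{N'}$ and the structural relations $f_{xy},f_{xy}^{N'},h_{xy}^{N'}$ produce the derivation and $N$-compatibility identities: their triviality is encoded in the relations (3.3), (3.4) and (3.5) on the constants $\delta_a^v$ and $n_x^v$. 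Compositions of $g_a$ with $g_b$ cannot appear as genuine intersections, since both leading words start with $t$, but one must still verify that any inclusion of $ta$ inside $t\,N(b)$ reduces correctly.

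The main obstacle will be the inclusion compositions in which the Nijenhuis operator is nested inside Shirshov's special bracketing. There one must expand $[\pi\mid_s]_{\bar{s}}$ explicitly and check, after cancellation via the Nijenhuis condition together with $D\circ N=N\circ D$, that the result is a combination of special normal $s$-words whose associative leading words are strictly smaller than $w$ under $>_{Deg\text{-}lex}$. Organizing the case analysis uniformly, and confirming that no overlap is overlooked due to the breath inequality $bre(w)<bre(\bar{s}_1)+bre(\bar{s}_2)$, is the technical heart of the argument; once each of the enumerated compositions is checked, the Composition-Diamond Lemma delivers the conclusion.
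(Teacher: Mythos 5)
Your roadmap is the same route the paper takes: identify the leading words $\bar f_{xy}=xy$, $\bar f_{xy}^{N'}=xN(y)$, $\bar h_{xy}^{N'}=N(x)N(y)$, $\bar g_a=ta$, $\bar g_a^{N'}=tN(a)$, enumerate the overlaps, and close each composition modulo $S$ using the Jacobi identity together with the structure-constant relations coming from $D$ being a derivation and from $D\circ N=N\circ D$ (the displayed relations \eqref{derivationrelation}--\eqref{nijenderirelation} and their companions). The paper's proof, however, consists precisely of \emph{executing} the checks you defer: it computes $\langle f_{xy},f_{yz}\rangle_{xyz}$, $\langle g_a,f_{ab}\rangle_{tab}$, $\langle h_{xy}^{N'},h_{yz}^{N'}\rangle_{N(x)N(y)N(z)}$, $\langle f_{xy}^{N'},f_{yz}^{N'}\rangle$ and $\langle g_a^{N'},h_{ab}^{N'}\rangle_{tN(a)N(b)}$ term by term, each time adding and subtracting lower-order tails so that the composition is rewritten as a combination of special normal $S$-words (e.g.\ $[g_b,a]+\sum_v\delta_b^v f_{va}-\sum_v\delta_a^v f_{vb}+\sum_v\alpha_{ab}^v g_v$) whose leading words are $<_{Deg\text{-}lex} w$, with the residual scalar tails annihilated exactly by the identities you cite. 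Since you yourself label these verifications ``the technical heart of the argument'' and leave them unperformed, your text is a correct outline of the paper's proof rather than a proof; note also that triviality of all compositions is the \emph{definition} of a Gr\"obner--Shirshov basis, so the Composition-Diamond Lemma is not what delivers this proposition --- it is invoked afterwards, for the normal-form theorem and the embedding.

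One anticipated difficulty in your plan is illusory. There are no inclusion compositions among the five families: every leading word has breath two and its prime factors are single generators or $N$ applied to a single generator, so no leading word occurs inside another as an operated subword. In particular your worry about ``any inclusion of $ta$ inside $tN(b)$'' is vacuous --- $N(b)$ is a single prime, so $ta$ is not a subword of $tN(b)$ in the sense of $\star$-$N$-words --- and likewise no $\bar s$ equals $\pi\mid_{\bar s'}$ with $\pi$ involving genuine $N$-nesting. This is why the paper verifies only intersection compositions. On the other side of the ledger, your insistence on a complete enumeration of overlap pairs is more scrupulous than the paper's proof, which treats a representative selection (it omits, for instance, the mixed intersections $f_{xy}\wedge f_{yz}^{N'}$ with $w=xyN(z)$ and $g_a\wedge f_{ab}^{N'}$ with $w=taN(b)$, which reduce by the same mechanism); a complete write-up along your plan would have to include those cases as well.
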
  
\begin{proof}
    First, we compute all possible compositions. We have $f_{xy}  \wedge f_{yz}$  as follows:
    \begin{align*}
\langle f_{xy},f_{yz}\rangle_{xyz}&=[f_{x y},z]-[x,f_{yz}]\\ 
&=[[x,y],z]-\sum_{v\in X}\alpha_{x y}^v[v,z]-[x,[y,z]]+\sum_{v\in X}\alpha_{ yz}^v [x,v]\\
  & =[y,[z,x]]-\sum_{v\in X}\alpha_{xy}^v[v,z]+\sum_{v\in X}\alpha_{ yz}^v[x,v] \quad \text{ Jacobi identity}\\
&=-[y,[x,z]-\sum_{v\in X}\alpha_{x z}^vv] + \sum_{v \in X} \alpha_{x z} ^{v} ([y,v] -\sum_{u} \alpha_{y v}^{u} u + \sum_{u} \alpha_{y v}^{u} u) \\
&- \sum_{v} \alpha_{x y}^{v} ([v,z] - \sum_{u} \alpha_{vz}^{u} u + \sum_{u} \alpha_{vz}^{u} u)\\
& +  \sum_{v} \alpha_{yz}^{v} ([x,v] - \sum_{u} \alpha_{xz}^{u} u + \sum_{u} \alpha_{xz}^{u} u)\\
&=-[y,f_{xz}]+ \sum_{v \in X} \alpha_{xz} ^{v} f_{yv} - \sum_{v} \alpha_{x y}^{v} f_{vz} +  \sum_{v} \alpha_{yz}^{v} f_{xv}\\
& + \sum_{v} \sum_{u} \alpha_{x z}^{v} \alpha_{y v}^{u} u - \sum_{v} \sum_{u} \alpha_{x y}^{v} \alpha_{vz}^{u} + \sum_{v} \sum_{u} \alpha_{yz}^{v} \alpha_{x z}^{u} u\\ &\equiv -[y,f_{xz}]+ \sum_{v \in X} \alpha_{xz} ^{v} f_{yv} - \sum_{v} \alpha_{x y}^{v} f_{vz} +  \sum_{v} \alpha_{yz}^{v} f_{xv} ~ (mod~S) ~ \text{by (2).}
\end{align*} 
Moreover, $g_a \wedge f_{ab}$ is:
    \begin{align*}
    \langle g_{a}, f_{ab}\rangle_{tab}&=[g_a,b] - [t, f_{ab}]\\
    &=[[t,a] - \sum_{v} \delta_{x}^v v,b ] - [t,[a,b] - \sum_{v} \alpha_{xy}^v v  ]
    \\&=[[t,a], b] - \sum_{v} \delta_{x}^{v} ([v,b] - \sum_{u} \alpha_{vb}^{u} u + \sum_{u} \alpha_{vb}^{u} u)
    \\ &  - [t,[a,b]] + \sum_{v} \alpha_{ab}^{v} ([t,v] - \sum_{u} \delta_{v}^{u} u+  \sum_{u} \delta_{v}^{u} u)
    \\ \text{by Jacobian rule}
    \\&= [[t,b] -  \sum_{v} \delta_{b}^{v} v+\sum_{v} \delta_{b}^{v}v,a] -  \sum_{v} \delta_{a}^{v} q_{vb}
    \\ &- \sum_{v} \sum_{u} \delta_{a}^{v} \alpha_{vb}^{u} u + \sum_{v} \alpha_{ab}^{v} g_v + \sum_{v} \sum_{u} \alpha_{ab}^{v} \delta_{v}^{u} u 
   \\ &=[g_b,a]+\sum_{v} \delta_{b}^{v} ([v,a] - \sum_{u} \alpha_{va}^{u} u + \sum_{u} \alpha_{va}^{u} u )
    \\&-  \sum_{v} \delta_{a}^{v} q_{vb} - \sum_{v} \sum \delta_{a}^{v} \alpha_{vb}^{u} u + \sum_{v} \alpha_{ab}^{v} g_v
    \\& +\sum_{v} \sum \alpha_{ab}^{v} \delta_{v}^{u} u 
   \\ &= [g_b,a]+\sum_{v} \delta_{b}^{v} q_{va} -  \sum_{v} \delta_{a}^{v} q_{vb}+  \sum_{v} \alpha_{ab}^{v} g_v
   \\& + \sum_{v} \sum \delta_{b}^{v} \alpha_{va}^u u 
   - \sum_{v } \sum \delta_{a}^{v} \alpha_{vb}^{u} u +\sum_{v } \sum \alpha_{ab}^{v} \delta_{v}^{u} u\\
&\equiv [g_b,a]+\sum_{v} \delta_{b}^{v} f_{va} -  \sum_{v} \delta_{a}^{v} f_{vb} +  \sum_{v} \alpha_{ab}^{v} g_v ~ (\text{mod}~S).
\end{align*}
The composition $ h_{xy} \wedge h_{yz}$ where $x>_{Deg-lex}y>_{Deg-lex}z$   is an intersection composition. We use the Jacobian rule along with the relations between structure constants to show that it is written as a linear combination of special normal $S$-words, where $S$ is the set of Nijenhuis Lie polynomials $f_{xy}^{N}$-and $h_{xy}^{N}-types$, as follows:
\begin{align*}
    &\langle h_{xy}, h_{yz} \rangle_{N(x)N(y)N(z)}=[[N(x),N(y)]-\sum_{v,u,w} n_x^v n_y^u \alpha_{vu}^w w,N(z)]\\
    &-[N(x),[N(y),N(z)]-\sum_{v,u,w} n_y^v n_z^u \alpha_{vu}^w w] \\
    &= [[N(z),N(x)], N(y)] -\sum_{v,u,w} n_x^v n_y^u \alpha_{vu}^w [w,N(z)]\\
    &+\sum_{v,u,w} n_y^v n_z^u \alpha_{vu}^w [N(x),w]\\
    &=[[N(z),N(x)], N(y)] -\sum_{v,u,w} n_x^v n_y^u \alpha_{vu}^w ([w,N(z)]- \sum_{u}  \sum_m n_z^m \alpha_{wm}^n n\\
    &+ \sum_{u}  \sum_m n_z^m \alpha_{wm}^n n+\sum_{v,u,w} n_y^v n_z^u \alpha_{vu}^w ([N(x),w]- \sum_{u}  \sum_m n_x^m \alpha_{mw}^n n\\
    &+ \sum_{u}  \sum_m n_x^m \alpha_{mw}^n n\\
    &= [[N(z),N(x)]-\sum_{v,u,w} n_z^v n_x^u \alpha_{vu}^w+\sum_{v,u,w} n_z^v n_x^u \alpha_{vu}^w, N(y)]\\
    &-\sum_{v,u,w} n_x^v n_y^u \alpha_{vu}^w f_{wz}^{N}+\sum_{v,u,w} n_y^v n_z^u \alpha_{vu}^w f_{xw}^{N}\\
    &-\sum_{v,u,w} n_x^v n_y^u \alpha_{vu}^w ( \sum_{u}  \sum_m n_z^m \alpha_{wm}^n n)+\sum_{v,u,w} n_y^v n_z^u \alpha_{vu}^w( \sum_{u}  \sum_m n_x^m \alpha_{mw}^n n)\\
    &=[h_{zx}^{N},N(y)]+\sum_{v,u,w} n_z^v n_x^u \alpha_{vu}^w ([w,N(y)]- \sum_{u}  \sum_v n_y^v \alpha_{wv}^u u + \sum_{u}  \sum_v n_y^v \alpha_{wv}^u u)\\
     &-\sum_{v,u,w} n_x^v n_y^u \alpha_{vu}^w f_{wz}^{N}+\sum_{v,u,w} n_y^v n_z^u \alpha_{vu}^w f_{xw}^{N}\\
     &-\sum_{v,u,w} n_x^v n_y^u \alpha_{vu}^w ( \sum_{u}  \sum_m n_z^m \alpha_{wm}^n n)+\sum_{v,u,w} n_y^v n_z^u \alpha_{vu}^w( \sum_{u}  \sum_m n_x^m \alpha_{mw}^n n)\\
     &=[h_{zx}^{N},N(y)]+\sum_{v,u,w} n_z^v n_x^u \alpha_{vu}^w f_{wy}^{N} -\sum_{v,u,w} n_x^v n_y^u \alpha_{vu}^w f_{wz}^{N}\\
     &+\sum_{v,u,w} n_y^v n_z^u \alpha_{vu}^w f_{xw}^{N}\\
     &-\sum_{v,u,w} n_x^v n_y^u \alpha_{vu}^w ( \sum_{u}  \sum_m n_z^m \alpha_{wm}^n n)\\
     &+\sum_{v,u,w} n_y^v n_z^u \alpha_{vu}^w( \sum_{u}  \sum_m n_x^m \alpha_{mw}^n n)\\
     &+\sum_{v,u,w} n_z^v n_x^u \alpha_{vu}^w (\sum_{u}  \sum_v n_y^v \alpha_{wv}^u u)\\
    & \equiv [h_{zx}^{N},N(y)]+\sum_{v,u,w} n_z^v n_x^u \alpha_{vu}^w f_{wy}^{N} -\sum_{v,u,w} n_x^v n_y^u \alpha_{vu}^w f_{wz}^{N}\\
    &+\sum_{v,u,w} n_y^v n_z^u \alpha_{vu}^w f_{xw}^{N}\quad (\text{mod}~S).
\end{align*} 
It is seen that $\overline{[h_{zx}^{N},N(y)]} <_{Deg-lex} [[N(x),N(y)],N(z)],$ which means it is trivial modulo $S$ with respect to $N(x)N(y)N(z).$ The intersection composition $f_{xy}^{N} \wedge f_{yz}^{N}$ where $x > N(y) > z$ is computed as follows:
 \begin{align*}
\langle f_{xy}^{N},f_{yz}^{N}\rangle_{xyz}&= [[x,N(y)]- \sum_{u}  \sum_v n_y^v \alpha_{xv}^u u,z]\\\
&+[x,[N(y),z]- \sum_{u}  \sum_v n_y^v \alpha_{vz}^u u]\\
&=[N(y),[z,x]] - \sum_{u}  \sum_v n_y^v \alpha_{xv}^u [u,z] - \sum_{u}  \sum_v n_y^v \alpha_{vz}^u [x,u]\\
  & =[N(y),[z,x]-\sum_{v} \alpha_{zx}^v v +\sum_{v} \alpha_{zx}^v v]\\
  &- \sum_{u}  \sum_v n_y^v \alpha_{xv}^u([u,z] - \sum_{m}\alpha_{uz}^m m +\sum_{m} \alpha_{uz}^mm)\\
  &- \sum_{u}  \sum_v n_y^v \alpha_{vz}^u ([x,u]-\sum_{m} \alpha_{xu}^m m +\sum_{m} \alpha_{xu}^m m)\\
  &=[N(y),f_{zx}]-\sum_v \alpha_{zx}^v([v,N(y)]- \sum_{m}  \sum_u n_y^u \alpha_{vu}^m m+ \sum_{m}  \sum_u n_y^v \alpha_{vu}^mm)\\
  &- \sum_{u}  \sum_v n_y^v \alpha_{xv}^u f_{uz} - \sum_{u}  \sum_v n_y^v \alpha_{xv}^u(\sum_{m} \alpha_{uz}^mm)\\
  &- \sum_{u}  \sum_v n_y^v \alpha_{vz}^u f_{xu}- \sum_{u}  \sum_v n_y^v \alpha_{vz}^u(\sum_{m} \alpha_{xu}^m m)\\
  &=[N(y),f_{zx}]-\sum_v \alpha_{zx}^v f_{vy}^{N}- \sum_{u}  \sum_v n_y^v \alpha_{xv}^u f_{uz}- \sum_{u}  \sum_v n_y^v \alpha_{vz}^u f_{xu}\\
  &-\sum_v \alpha_{zx}^v ( \sum_{m}  \sum_u n_y^v \alpha_{vy}^m m)- \sum_{u}  \sum_v n_y^v \alpha_{xv}^u(\sum_{m} \alpha_{uz}^mm)\\
  &- \sum_{u}  \sum_v n_y^v \alpha_{vz}^u(\sum_{m} \alpha_{xu}^m m)\\
  &\equiv [N(y),f_{zx}]-\sum_v \alpha_{zx}^v f_{vy}^{N}- \sum_{u}  \sum_v n_y^v \alpha_{xv}^u f_{uz}- \sum_{u}  \sum_v n_y^v \alpha_{vz}^u f_{xu}\\
  &(\text{mod} ~S). \quad (\text{By relations between coefficients due to Jacobi identity})
\end{align*} 
Another intersection composition $g_{a}^{N} \wedge h_{ab}^N$ for $a,b \in \mathfrak{S}$ is computed as follows. Note that, we assume $t> N(a)> N(b).$

\begin{align*}
    \langle g_{a}^{N}, h_{ab}^{N}\rangle_{tN(a)N(b)}&=[[t,N(a)] - \sum_{u}\sum_{v} n_{a}^v\delta_{v}^u u,N(b) ]\\
    &- [t,[N(a),N(b)] - \sum_{m,u,v} n_{a}^v n_{b}^u \alpha_{vu}^m m  ]
    \\&=[[t,N(a)], N(b)]\\
    &- \sum_{u}\sum_{v} n_{a}^v\delta_{v}^u ([u,N(b)] - \sum_{m} \sum_{k} n_{b}^{k}\alpha_{uk}^{m} m + 
    \sum_{m} \sum_{k} n_{b}^{k}\alpha_{uk}^{m} m)
    \\ &  - [t,[N(a),N(b)]] + \sum_{m,u,v} n_{a}^v n_{b}^u \alpha_{vu}^m ([t,m] - \sum_{u} \delta_{m}^{k} k+  \sum_{u} \delta_{m}^{k} k)
    \\ \text{by Jacobian rule}
    \\&= [[t,N(b)] -  \sum_{u} \sum_v n_{b}^v\delta_{N(b)}^{u} u+\sum_u\sum_{v} n_{b}^v \delta_{N(b)}^{u}u,N(a)]\\
    &- \sum_{u}\sum_{v} n_{a}^v\delta_{v}^u f_{ub}^N-\sum_{u}\sum_{v} n_{a}^v\delta_{v}^u (\sum_{m} \sum_{k} n_{b}^{k}\alpha_{uk}^{m} m)\\
     & + \sum_{m,u,v} n_{a}^v n_{b}^u \alpha_{vu}^m g_{m}+ \sum_{m,u,v} n_{a}^v n_{b}^u \alpha_{vu}^m ( \sum_{k} \delta_{m}^{k} k)\\
   &=[g_{b}^{N},N(a)]\\
   &+\sum_u\sum_{v} n_{b}^v \delta_{N(b)}^{u}([u,N(a)]-\sum_{m} \sum_{k} n_{a}^{k} \alpha_{uk}^mm\\
   &+\sum_{m} \sum_{k} n_{a}^{k} \alpha_{uk}^mm)\\
    &- \sum_{u}\sum_{v} n_{a}^v\delta_{v}^u f_{ub}^N-\sum_{u}\sum_{v} n_{a}^v\delta_{v}^u (\sum_{m} \sum_{k} n_{b}^{k}\alpha_{uk}^{m} m)\\
     & + \sum_{m,u,v} n_{a}^v n_{b}^u \alpha_{vu}^m g_{m}+ \sum_{m,u,v} n_{a}^v n_{b}^u \alpha_{vu}^m ( \sum_{k} \delta_{m}^{k} k)\\ 
     &=[g_{b}^{N},N(a)]+\sum_u\sum_{v} n_{b}^v \delta_{N(b)}^{v} f_{ua}^{N} - \sum_{u}\sum_{v} n_{a}^v\delta_{v}^u f_{ub}^N\\
     &+ \sum_{m,u,v} n_{a}^v n_{b}^u \alpha_{vu}^m g_{m}\\
     &+\sum_u\sum_{v} n_{b}^v \delta_{N(b)}^{u}(\sum_{m} \sum_{k} n_{a}^{k} \alpha_{uk}^mm)\\
     &-\sum_{u}\sum_{v} n_{a}^v\delta_{v}^u (\sum_{m} \sum_{k} n_{b}^{k}\alpha_{uk}^{m} m)
     + \sum_{m,u,v} n_{a}^v n_{b}^u \alpha_{vu}^m ( \sum_{u} \delta_{m}^{k} k)\\
     &\equiv [g_{b}^{N},N(a)]+\sum_u\sum_{v} n_{b}^v \delta_{N(b)}^{v} f_{ua}^{N} - \sum_{u}\sum_{v} n_{a}^v\delta_{v}^u f_{ub}^N\\
     &+ \sum_{m,u,v} n_{a}^v n_{b}^u \alpha_{vu}^m g_{m} \quad (\text{mod} S).
 \end{align*}
Based on the above computations, the set $\{f_{xy}, f_{xy}^{N'}, h_{xy}^{N'}, g_{a}, g_{a}^{N'}\}$ is a Gr\"obner-Shirshov basis for the HNN-extension of Nijenhuis Lie algebra $N\mathfrak{Lie}(X).$
\end{proof}
The following theorem provides a normal form the elements of HNN-extension $N'H_{D}.$
\begin{theorem}
    A linear basis for HNN-extension $N'H_{D}$ is given by all the Nijenhuis Lyndon-Shirshov words on $X\cup \{t\}$ which do not contain subwords of the form $xy$ and $N(x)N(y)$ with $x,y \in X$ and $x > y$ and $N(x)>N(y)$, or of the form $ta$ and $tN(a)$ with $a \in  \mathfrak{S}.$
\end{theorem}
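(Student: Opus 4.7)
The plan is to deduce this theorem directly from the Composition-Diamond Lemma (Theorem \ref{CDlemma}), using the Gr\"obner-Shirshov basis that has just been established in the preceding Proposition. Since the hard analytic work, verifying triviality of all compositions modulo $S$, was already carried out, only the bookkeeping of leading terms and an appeal to part (iii) of Theorem \ref{CDlemma} remain.

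First, I would invoke the presentation $N'H_{D}= N\mathfrak{Lie}(X\cup\{t\}\mid S)$ with $S=\{f_{xy},f_{xy}^{N'},h_{xy}^{N'},g_{a},g_{a}^{N'}\}$ together with the preceding Proposition, which asserts that $S$ is a Gr\"obner-Shirshov basis in $N\mathfrak{Lie}(X\cup\{t\})$. By clause (iii) of Theorem \ref{CDlemma}, the set
\[
\mathrm{Irr}(S)=\{[w]\mid w\in LS(N;X\cup\{t\}),\ w\neq \pi\mid_{\bar{s}}\text{ for any }s\in S,\ \pi\in (N;X\cup\{t\})^{\star}\}
\]
is automatically a linear basis of $N'H_{D}$, so the task reduces to translating the condition ``$w$ contains no subword of the form $\pi\mid_{\bar{s}}$'' into the explicit list stated in the theorem.

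Next, I would read off the leading associative $N$-words from each defining polynomial using the chosen Deg-lex ordering with $t>X$: $\overline{f_{xy}}=xy$ (with $x>y$ in $X$), $\overline{f_{xy}^{N'}}=xN(y)$, $\overline{h_{xy}^{N'}}=N(x)N(y)$ (with $N(x)>N(y)$, equivalently $x>y$), $\overline{g_{a}}=ta$, and $\overline{g_{a}^{N'}}=tN(a)$ for $a\in Y$. Each of these is visibly an LS-Nijenhuis word (as required so that the Shirshov bracketing in the proposition makes sense), and each forbidden subword pattern in the statement of the theorem corresponds to one of these leading terms appearing inside some $w\in LS(N;X\cup\{t\})$ via a $\star$-$N$-word $\pi$. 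Thus $\mathrm{Irr}(S)$ is precisely the set of Nijenhuis Lyndon-Shirshov words on $X\cup\{t\}$ avoiding all five forbidden patterns.

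The only subtlety worth flagging is ensuring that the match between ``subword of $w$'' and ``$w=\pi\mid_{\bar{s}}$ for some $\pi\in(N;X\cup\{t\})^{\star}$'' is faithful in the Nijenhuis setting: a subword can sit inside iterated applications of $N$, and this is exactly what the $\star$-$N$-word formalism is designed to capture, so no additional argument is required. I expect the main (minor) obstacle to be verifying that the list in the theorem indeed enumerates all leading terms; in particular, one should either note that the leading monomial $xN(y)$ of $f_{xy}^{N'}$ is covered by the statement (or flag it as a subword pattern that must also be excluded, depending on the authors' convention). With this checked, the theorem follows immediately from the identification $\text{(basis of }N'H_{D}\text{)}=\mathrm{Irr}(S)$ provided by Theorem \ref{CDlemma}.
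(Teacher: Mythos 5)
Your proposal follows exactly the paper's own route: the paper's proof is a one-line appeal to the triviality of all compositions (the preceding Proposition) together with part (iii) of Theorem \ref{CDlemma}, which you simply spell out by identifying $Irr(S)$ with the set of LS-Nijenhuis words on $X\cup\{t\}$ avoiding the leading terms of $S$. Your flagged subtlety is in fact sharper than the paper: the leading term $x\,N(y)$ of $f_{xy}^{N'}$ is a fifth forbidden pattern that the theorem's statement omits, so your version of the argument, which insists this pattern be excluded as well, is the correct reading.
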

\begin{proof}
    The proof is based on the triviality of all possible compositions of Lie and Nijenhuis Lie polynomials and part (iii) of the  Theorem \ref{CDlemma}.
\end{proof}
\begin{corollary}
Every Nijenuis Lie algebra embeds into its HNN-extension. 
\end{corollary}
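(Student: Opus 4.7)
The plan is to read the embedding off directly from the normal form theorem just established, by comparing the basis it provides for $N'H_D$ with a basis for $N\mathfrak{L}$ obtained by applying the same Gr\"obner--Shirshov machinery to $N\mathfrak{L}$ in isolation. The canonical map $\iota\colon N\mathfrak{L}\to N'H_D$ induced by the inclusion of generators $X\hookrightarrow X\cup\{t\}$ will then be shown to send a linear basis of the source injectively into the basis of the target.

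First I would verify that the subset
\[ S_0=\{f_{xy},\,f_{xy}^{N'},\,h_{xy}^{N'}\mid x,y\in X,\ x>y\} \]
of the set $S$ considered in the preceding proposition is itself a Gr\"obner--Shirshov basis for the presentation of $N\mathfrak{L}$ through its own structural constants. This reduces to checking the $f\wedge f$, $f^{N'}\wedge f^{N'}$, $h^{N'}\wedge h^{N'}$, and mixed intersection compositions among these polynomials, all of which already appear inside the proof of the previous proposition and are trivial modulo $(S_0,w)$ (the $g$-polynomials played no role in those computations). Applying Theorem~\ref{CDlemma}(iii) yields a linear basis of $N\mathfrak{L}$ consisting of the Nijenhuis Lyndon--Shirshov words on $X$ which avoid the subwords $xy$ and $N(x)N(y)$ for $x>y$.

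Second, I would compare this basis with the basis of $N'H_D$ provided by the preceding theorem. Since the extended order on $X\cup\{t\}$ satisfies $t>X$, every $t$-free associative Nijenhuis word on $X\cup\{t\}$ is a Nijenhuis word on $X$, the Lyndon--Shirshov property of such a word is decided by lexicographic comparisons that never involve $t$, and Shirshov's bracketing of a $t$-free word introduces no $t$. Consequently the $t$-free elements of the basis of $N'H_D$ from the theorem form precisely the basis of $N\mathfrak{L}$ described above, and none of them coincides with a forbidden subword $ta$ or $tN(a)$. Hence $\iota$ sends a basis of $N\mathfrak{L}$ to a linearly independent subset of $N'H_D$, so $\iota$ is injective, which is the desired embedding.

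The main delicate point I anticipate is verifying the compatibility of the Lyndon--Shirshov structure on $X$ with that on $X\cup\{t\}$, i.e.\ that a word in $LS(N;X)$ remains in $LS(N;X\cup\{t\})$ with the same Shirshov bracketing. This is where the hypothesis $t>X$ is essential: because $t$ is a strictly maximal new letter that does not occur in any word being compared, no lexicographic comparison is affected, and the inductive definition of $LS(N;X)_n$ produces the same output in either alphabet. Once this compatibility is in place, the embedding follows immediately from the two invocations of the Composition--Diamond Lemma.
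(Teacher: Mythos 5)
Your argument is correct, but it takes a noticeably heavier route than the paper's one-line proof. The paper exploits the standing assumption that $X$ is a linear basis of $N\mathfrak{L}$: each $x \in X$ is a single-letter Lyndon--Shirshov Nijenhuis word containing none of the forbidden subwords $xy$, $N(x)N(y)$, $ta$, $tN(a)$, so the canonical map sends the basis $X$ into the normal-form basis of $N'H_{D}$ furnished by the preceding theorem; a linear map carrying a basis to a linearly independent set is injective, and that is the whole proof. You instead re-run the Composition--Diamond machinery on $S_{0}=\{f_{xy}, f_{xy}^{N'}, h_{xy}^{N'}\}$ to produce a basis of the $t$-free part and then match it against the $t$-free normal words of $N'H_{D}$. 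This is legitimate: you are right that the compositions among $S_{0}$ computed in the proposition involve only $S_{0}$-polynomials (the $g$-relations never enter those reductions), so $S_{0}$ is a Gr\"obner--Shirshov basis for the presentation of $N\mathfrak{L}$ by structural constants, and the alphabet-extension compatibility of Lyndon--Shirshov words is as painless as you claim --- though note that this compatibility needs only that $t$ does not occur in the words being compared, not that $t>X$; the placement $t>X$ matters elsewhere, namely in making $ta$ and $tN(a)$ the leading words of $g_{a}$ and $g_{a}^{N'}$. What your longer route buys is a sharper statement: it identifies the full image of the presented copy of $N\mathfrak{L}$ with exactly the $t$-free normal words, which is precisely what one would need if $X$ were merely a generating set. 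Since $X$ is assumed to be a linear basis, the paper's shortcut renders your first step unnecessary, at the cost of saying less about the image.
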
    
\begin{proof}
    All elements of $X$ are words in normal form.
\end{proof}
\begin{corollary}
    Let $x$ be an element in $N\mathfrak{L}$ but not in the subalgebra $\mathfrak{S}$. Then $t$, $x$ and $N(x)$ freely generate a subalgebra generated by them.
\end{corollary}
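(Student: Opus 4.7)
I would interpret the corollary as the injectivity of the natural Lie homomorphism $\varphi \colon \mathfrak{Lie}(T_1,T_2,T_3) \to N'H_{D}$ sending the three free generators to $t$, $x$, and $N(x)$, respectively, and verify injectivity using the normal form of the preceding theorem. First, enlarge the well-ordered basis $X$ of $N\mathfrak{L}$, if necessary, so that $x \in X$; since $x \notin \mathfrak{S}$, we may simultaneously arrange $x \notin Y$, where $Y$ is the chosen basis of $\mathfrak{S}$. Keep $t$ as the maximum letter and set the order so that $t > N(x) > x$, compatible with the Deg-lex order used in the theorem.

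Next, take any nonzero $p \in \mathfrak{Lie}(T_1, T_2, T_3)$ and express it as a linear combination of Lyndon--Shirshov terms in the alphabet $\{t, x, N(x)\}$; let $[w]$ be its leading LS-term under Deg-lex. The substitution $T_1 \mapsto t$, $T_2 \mapsto x$, $T_3 \mapsto N(x)$ sends $[w]$ to a Nijenhuis Lyndon--Shirshov term on $X \cup \{t\}$, and the goal is to show that $[w]$ survives as the leading term of $\varphi(p)$ in the normal form guaranteed by part (iii) of Theorem \ref{CDlemma}.

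The decisive point is that the Gr\"obner--Shirshov basis $S=\{f_{xy}, f_{xy}^{N'}, h_{xy}^{N'}, g_a, g_a^{N'}\}$ splits into two kinds: the $t$-involving relations $g_a, g_a^{N'}$, which require $a \in Y$ and therefore \emph{cannot} fire on any subword $tx$ or $tN(x)$ of $w$; and the purely $N\mathfrak{L}$-internal relations $f, f^{N'}, h^{N'}$, whose reductions produce summands that contain no new occurrence of $t$ and have strictly lower Deg-lex weight than any word in $\varphi(p)$ that contains $t$. Hence, after full reduction, the leading normal form of $\varphi(p)$ equals $[w]$ and is nonzero, so $\varphi(p) \neq 0$. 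The subcase $p \in \mathfrak{Lie}(T_2, T_3)$ reduces to the analogous assertion for the subalgebra of $N\mathfrak{L}$ generated by $x, N(x)$, which is handled entirely inside the LS-word basis of $N\mathfrak{Lie}(X)$.

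The main obstacle will be verifying cleanly that no cascade of reductions via $f, f^{N'}, h^{N'}$ on inner subwords (for instance inside $[[t,x],N(x)]$ or $[t,[x,N(x)]]$) can reshape the leading $t$-dependent term into something that cancels. The resolution is that the $t$-content of a monomial is rigid under $S$: only $g$ and $g^{N'}$ can remove or create $t$, and both are blocked by $x \notin Y$; thus every reduction preserves the position and count of $t$ in the leading word.
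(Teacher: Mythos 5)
Your overall route is the paper's route: the paper's entire proof is the single observation that all Nijenhuis Lyndon--Shirshov words on the letters $t$ and $x$ are in normal form, i.e.\ lie in $Irr(S)$ of Theorem~\ref{CDlemma}(iii), so that images of distinct LS-terms on $\{t,x,N(x)\}$ are linearly independent. But your execution has a genuine gap at exactly the decisive step. You allow that the relations $f_{uv}$, $f_{uv}^{N'}$, $h_{uv}^{N'}$ might fire on words of $\varphi(p)$ and try to neutralize this with two claims: that such reductions produce words of ``strictly lower Deg-lex weight than any word in $\varphi(p)$ that contains $t$'' (false as stated --- a reduction performed inside a long $t$-containing word returns $t$-containing words only slightly lower in the order, in general far above other $t$-words of $\varphi(p)$), and that the count and position of $t$ is rigid. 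Rigidity of the $t$-count cannot carry the argument: if an $f$-type reduction could act \emph{inside} the leading word $[w]$, the normal form of $\varphi(p)$ would no longer have leading word $w$ and cancellation among the rewritten terms becomes possible; compare the map $T_1\mapsto t$, $T_2\mapsto x$, $T_3\mapsto y$ for basis letters $x>y$ with $[x,y]=0$ in $N\mathfrak{L}$, where the $t$-count is equally rigid yet injectivity fails. The missing observation --- which is precisely the content of the paper's one-line proof --- is that every leading word of $f_{uv}$, $f_{uv}^{N'}$, $h_{uv}^{N'}$ (namely $uv$, $uN(v)$, $N(u)N(v)$) involves two \emph{distinct} letters $u>_{lex}v$ of $X$, whereas a word on the alphabet $\{t,x,N(x)\}$ contains only the single $X$-letter $x$ besides $t$ and the prime $N(x)$. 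Combined with your (correct) point that $g_a$, $g_a^{N'}$ are blocked by $x\notin Y$, \emph{no} reduction fires at all: each LS-term on $\{t,x,N(x)\}$ already lies in $Irr(S)$, and injectivity of $\varphi$ follows directly from Theorem~\ref{CDlemma}(iii), with no leading-term survival analysis needed.

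Two smaller repairs. First, the order you stipulate, $t>N(x)>x$, is incompatible with the paper's Deg-lex order, since $\deg(N(x))=2>\deg(t)=1$ forces $N(x)>_{Deg-lex}t>x$; this is harmless (any fixed order of the three primes makes LS-terms on $\{T_1,T_2,T_3\}$ correspond to LS-terms on $\{t,x,N(x)\}$), but the abstract generators must be ordered to match. Second, the $t$-free subcase should not be delegated to ``the subalgebra of $N\mathfrak{L}$ generated by $x$ and $N(x)$'': for a general (e.g.\ finite-dimensional) Nijenhuis Lie algebra that subalgebra is certainly not free, so the assertion you invoke is false as a statement about $N\mathfrak{L}$ itself. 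What you need --- and what the distinct-letter observation above delivers --- is that LS-words on the two primes $x$ and $N(x)$ are $S$-irreducible in the presented algebra $N'H_{D}$, where $N(x)$ figures as a formal prime word rather than as the element $\sum_{v} n_x^v v$ of $\Span(X)$.
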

\begin{proof}
    Let choose the basis $X$ such that $x$ is in $X$ and not in $\mathfrak{S}$. It is seen that all the Nijenhuis Lyndon-Shirshov words on the letters $x$ and $t$ are in normal form, therefore, the subalgebra of $N'H_{D}$ generated by $t,$ $x$ and $N(x)$ is freely generated by them.
\end{proof}

%
%

\end{document}